\documentclass[a4paper,10pt]{article}
\usepackage[utf8]{inputenc}
\usepackage[lite]{amsrefs}
\usepackage{amssymb}
\usepackage{amsmath}
\usepackage{amsthm}
\usepackage{color}
\usepackage{pdfsync}
\usepackage{fancyhdr}
\usepackage{enumerate}
\usepackage{stmaryrd}
\usepackage{tikz-cd}
\usepackage{graphicx}
\usepackage{fullpage}

\theoremstyle{definition}
\newtheorem{definition}{Definition}[section]
\theoremstyle{definition}
\newtheorem{theorem}{Theorem}
\theoremstyle{definition}
\newtheorem*{remark}{Remark}

\newtheorem{lemma}{Lemma}

\newcommand{\N}{\mathbb{N}}

\newcommand{\Z}{\mathbb{Z}}
\newcommand{\Irr}{\textrm{Irr }}

\title{A Partial Order on Bipartitions From the Generalized Springer Correspondence}
\author{Jianqiao Xia}
\date{\today}

\begin{document}
\maketitle
\begin{abstract}
In \cite{Lusztig}, Lusztig gives an explicit formula for the bijection between the set of bipartitions and the set $\mathcal{N}$ of unipotent classes in
a spin group which carry irreducible local systems equivariant for the
spin group but not equivariant for the special orthogonal group. The set $\mathcal{N}$ has a natural partial order and therefore induces a partial order on bipartitions. We use the explicit formula given in \cite{Lusztig} to prove that this partial order on bipartitions is the same as the dominance order appeared in Dipper-James-Murphy's work (\cite{Dipper-James-Murphy}). 
\end{abstract}
\section{Preliminaries}
For group $G = \textrm{Spin}_n(k)$, where $k$ is a field of characteristic not equal to 2, let $\mathcal{N}$ be the set of unipotent classes in $G$ which carry irreducible local systems, equivariant for the conjugation action of $G$, but not equivariant for the conjugation action of the special orthogonal group. Then $\mathcal{N}$ has a one-to-one correspondence with a certain set of partitions $X_n$ (see \cite{Lusztig}, section 14).
$X_n$ consists of partitions $\lambda = (\lambda_1 \geq \lambda_2 \geq \cdots \lambda_m)$ of $n$, such that each $\lambda_i \in \N_{+}$ and 
\begin{enumerate}
\item for each integer $n \in 2\Z + 1$, the set $\{i; \lambda_i = n\}$ has at most one element;
\item for each integer $n \in 2\Z$, the set $\{i; \lambda_i = n\}$ has even number of elements. 
\end{enumerate}
Let $\Irr W_s$ be the set of all bipartitions of $s$. Then the Generalized Springer Correspondence for Spin group gives a bijection 
\begin{equation}\label{eq: 1}
X_n \longleftrightarrow \bigsqcup_{t \in 4\Z + n} \Irr W_{\frac{1}{4}(n - 2t^2 + t)}.
\end{equation}
In \cite{Lusztig}, Lusztig gives an explicit formula for this bijection. Specifically, let $\lambda = (\lambda_1 \geq \cdots \geq \lambda_m) \in X_n$. Define 
\begin{equation}
t_i = \sum_{j \geq i+1} d(\lambda_j).
\end{equation}
and 
\begin{equation}
t = \sum_{j \geq 1}d(\lambda_j).
\end{equation}
Here
\begin{equation}
d(\lambda_j) = 
\begin{cases}
0 & \textrm{If }\lambda_j \textrm{ is even.} \\
(-1)^{\frac{\lambda_j(\lambda_j - 1)}{2}} & \textrm{If }\lambda_j \textrm{ is odd.}
\end{cases}
\end{equation}
Then the image of $\lambda$ under the bijection can be constructed in the following way:
\begin{enumerate}
\item If $\lambda_i \in 4\Z + 1$, then lable this entry by $a$, and replace this entry by $\frac{1}{4}(\lambda_i - 1) - t_i$.
\item  If $\lambda_i \in 4\Z + 3$, then lable this entry by $b$, and replace this entry by $\frac{1}{4}(\lambda_i - 3) + t_i$.
\item If $\lambda_i = e \in 4\Z + 2$, then by definition it appears $2p$ times. Replace these entries by 
\begin{equation}
\frac{1}{4}(e - 2) + t_i,  \frac{1}{4}(e + 2) - t_i, \cdots, \frac{1}{4}(e + 2) - t_i 
\end{equation}
respectively, and label them as $b, a, b, \cdots, a, b, a$. 
\item If $\lambda_i = e \in 4\Z$, then by definition it appears $2p$ times. Replace these entries by 
\begin{equation}
\frac{1}{4}e + t_i,  \frac{1}{4}e - t_i, \cdots, \frac{1}{4}e - t_i 
\end{equation}
respectively. Label them as $b, a, b, \cdots, a, b, a$. 
\end{enumerate}
The modified entries with lable $a$ form an decreasing sequence $\alpha$. The entries with lable $b$ form an decreasing sequence $\beta$. If $t > 0$, then $\lambda$ corresponds to $(\alpha, \beta)$ in the bijection. If $t \leq 0$, then $\lambda$ corresponds to $(\beta, \alpha)$. Moreover, the bipartion $(\alpha, \beta)$ (when $t \geq 0$) or $(\beta, \alpha)$ (when $t \leq 0$) is an element in $\Irr W_{\frac{1}{4}(n - 2t^2 + t)}$. 
\begin{remark}
In Lusztig's paper \cite{Lusztig}, he gives the formula for partitions in increasing order. Here I simply translated everything in decreasing order, for convenience of the following proof. Moreover, for a partition in decreasing order, we can view it as an infinite sequence, by adding 0's. 
\end{remark}

There is a natural partial order on $\mathcal{N}$: $c\leq c'$ if $c$ is contained in the closure of $c'$. This partial order is given below, in terms of elements in $X_n$:
\begin{definition}
For $\lambda, \mu \in X_n$ and each is in decreasing order. We say $\lambda \leq \mu$ if and only if for all $i \in \N$
\begin{equation}
\sum_{j \leq i}\lambda_j \leq \sum_{j \leq i}\mu_j. 
\end{equation}
\end{definition}
From the bijection \eqref{eq: 1}, we have an induced partial order on the set of bipartions $\Irr W_{m}$, for each $t$. This partial order is closely related to that found in Dipper-James-Murphy's paper (\cite{Dipper-James-Murphy}), and also appears in Geck and Iancu's paper (\cite{Iancu}) as the aymptotic case for their pre-order relation on $\Irr W$, indexed by two parameters $a, b$. In the aymptotic case $b > (n - 1)a$, their pre-order is a partial order, and is defined by 
\begin{definition}{(Dipper-James-Murphy)}
The dominance order between $(\lambda, \mu), (\lambda', \mu') \in \Irr W$, each in decreasing order, is 
\begin{equation}
(\lambda, \mu) \leq (\lambda', \mu') \Leftrightarrow
\begin{cases}
\sum_{j \leq k}\lambda_j \leq \sum_{j \leq k}\lambda_j' & \textrm{for all $k$}\\
|\lambda| + \sum_{j \leq k}\mu_j \leq |\lambda'| + \sum_{j \leq k}\mu_j' & \textrm{for all $k$}.\\
\end{cases}
\end{equation}
The main result of this paper is 
\begin{theorem}
For $t \geq \frac{3}{2} m$, the induced partial order on $\Irr W_m$ from the inclusion $\Irr W_{m} \hookrightarrow X_{2t^2 - t + 4m}$, is the dominance order. 
\end{theorem}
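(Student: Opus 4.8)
The plan is to invert Lusztig's formula explicitly and then translate the partial-sum inequalities. Assume $m>0$, so that $t\geq\tfrac32 m>0$ (for $m=0$ the statement is empty); then in Lusztig's recipe every element of $\Irr W_m$ is the image of a unique partition $P(\alpha,\beta)\in X_n$, $n=2t^2-t+4m$, built directly from the bipartition $(\alpha,\beta)$ (the case $t>0$, with no interchange of the two parts). Write the Dipper--James--Murphy inequalities as (A) $\sum_{j\leq k}\alpha_j\leq\sum_{j\leq k}\alpha'_j$ for all $k$, and (B) $|\alpha|+\sum_{j\leq k}\beta_j\leq|\alpha'|+\sum_{j\leq k}\beta'_j$ for all $k$. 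The first task is to reverse the construction and describe $P(\alpha,\beta)$. The governing point is a separation of scales: the entries of $\alpha$ and $\beta$ are all $\leq m$, while $t\geq\tfrac32 m$; any row of $P(\alpha,\beta)$ labelled $b$---whether it is $\equiv 3\pmod 4$, or one of a pair of equal even rows---contributes to $\beta$ a value at least $t_i$, so $t_i\leq m$ at that row. Hence $P(\alpha,\beta)$ splits into a \emph{pure segment} of length $\approx t-m$, on which $t_i=t-i$, every row is $\equiv 1\pmod 4$, and the $i$-th row equals $4(\alpha_i+(t-i))+1$, followed by a \emph{mixed segment} carrying the remaining (small) parts of $\alpha$, all of $\beta$, and the even rows. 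On the mixed segment one recovers the $t_i$ recursively---a row $\equiv 1$ lowers $t_i$ by $1$, a row $\equiv 3$ raises it by $1$, an even row leaves it fixed---and then obtains an explicit formula for each row in terms of $i$, of one entry of $\alpha$ or of $\beta$, and of $t_i$. The hypothesis $t\geq\tfrac32 m$ is exactly what makes the pure segment long enough, keeps the $t_i$ in the mixed segment under control, and pins down the alternating $b,a,b,\dots$ labelling on the even rows, so that the combinatorial type of $P(\alpha,\beta)$ does not jump as $(\alpha,\beta)$ varies.

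\textbf{Partial sums.}
Next I would compute $\sum_{j\leq i}P(\alpha,\beta)_j$. On the pure segment this equals $\sum_{j\leq i}(4(t-j)+1)+4\sum_{j\leq i}\alpha_j$, whose first summand is independent of $(\alpha,\beta)$. On the mixed segment it equals a ``backbone'' term depending only on $i$ and the length of $P(\alpha,\beta)$, plus $4$ times a quantity which, as $i$ grows, runs through exactly the numbers $\sum_{j\leq k}\alpha_j$ and then, once it has reached $|\alpha|$, the numbers $|\alpha|+\sum_{j\leq k}\beta_j$---precisely the data of (A) and (B). The delicate point is that within the mixed segment the fine order of rows can change with $(\alpha,\beta)$, and an even row contributes two equal parts to $P(\alpha,\beta)$ while supplying one value to $\alpha$ and one to $\beta$ (the even value being twice the sum of the two); one checks, using $t\geq\tfrac32 m$, that the $\alpha$-values and the $\beta$-values are each produced in weakly decreasing order there, so that summing over an initial run of rows still only ever sees $4$ times a prefix sum of $\alpha$ plus a prefix sum of $\beta$.

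\textbf{The two implications.}
For ``(A) and (B) $\Rightarrow P(\alpha,\beta)\leq P(\alpha',\beta')$'': substitute (A) and (B) into the formulas above; when the two partitions have equal length the backbones coincide and the conclusion is immediate, and when the lengths differ, at the indices where the backbones could disagree the shorter partition has already reached its total $n$, so the inequality holds regardless. For the converse, argue by contraposition. If (A) fails, take $k$ least with $\sum_{j\leq k}\alpha_j>\sum_{j\leq k}\alpha'_j$; then $\alpha_k>0$, hence $k\leq m$. If $t\geq 2m$, this $k$ lies in the pure segments of both partitions, where $\sum_{j\leq k}P(\alpha,\beta)_j-\sum_{j\leq k}P(\alpha',\beta')_j=4(\sum_{j\leq k}\alpha_j-\sum_{j\leq k}\alpha'_j)>0$; for $\tfrac32 m\leq t<2m$ one uses the explicit mixed-segment formula at this $k$, where the $\alpha$-sum still enters with positive coefficient and everything else is pinned by (A) holding below $k$. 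Either way $P(\alpha,\beta)\not\leq P(\alpha',\beta')$. If instead (B) fails, run the same argument at an index $i$ where the partial-sum expression has passed $|\alpha|$ and is accumulating $\beta$.

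\textbf{The main obstacle.}
I expect the reversal step to be where the real work lies: inverting Lusztig's formula honestly on the mixed segment---determining the interleaving of the $\equiv 1$, $\equiv 3$ and even rows, verifying that the constant $\tfrac32$ (rather than a larger multiple of $m$) already forces the combinatorial type to be stable, and keeping the partition-versus-infinite-sequence (trailing zero) bookkeeping consistent. The even rows are the crux, since each one fills two equal rows of $P(\alpha,\beta)$ and couples an entry of $\alpha$ with an entry of $\beta$, so their placement must be tracked with particular care.
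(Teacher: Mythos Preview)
Your overall plan---invert Lusztig's recipe, write partial sums of $P(\alpha,\beta)$ as ``backbone plus $4$ times Dipper--James--Murphy data'', then compare---is exactly the paper's strategy, and your identification of the pure segment (the first $m$ rows, all congruent to $1\pmod 4$, encoding $\alpha$) is correct. Two points, however, separate your sketch from a working proof.

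First, a missed simplification. For $t\geq m$ the partition $P(\alpha,\beta)$ has \emph{no} rows congruent to $3\pmod 4$: a simple counting argument (the paper's Lemma~1) shows that if even one such row were present, then the $4\Z+1$ rows alone would already sum to more than $n$. So your whole discussion of $\equiv 3$ rows, of $t_i$ being ``raised by $1$'', and of tracking three row-types on the mixed segment is vacuous. What actually happens is cleaner: the odd rows of $P(\alpha,\beta)$ are exactly $4\alpha_i+4(t-i)+1$ for $i=1,\dots,t$, the even rows are the pairs $2\beta_j,2\beta_j$, and on the mixed segment the odd rows are the fixed arithmetic sequence $4(t-m)-3,\dots,5,1$ interleaved with the even rows. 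In particular the $a$-labels on even blocks all contribute $0$ to $\alpha$; an even pair contributes only to $\beta$.

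Second, and more seriously, your claim that on the mixed segment the partial sum equals a backbone depending only on $i$ and the \emph{length} of $P(\alpha,\beta)$, plus $4(|\alpha|+\sum_{j\leq k}\beta_j)$, is not correct. At a given index $i$ in the mixed segment the partial sum is
\[
\sum_{j\leq i}\lambda_j \;=\; \sum_{j=1}^{o}\bigl(4(t-j)+1\bigr)\;+\;4|\alpha|\;+\;4\sum_{j\leq s}\beta_j\;+\;(\text{possible half-pair}),
\]
where $o$ is the number of odd rows among $\lambda_1,\dots,\lambda_i$ and $2s$ (or $2s+1$) the number of even rows. The ``backbone'' $\sum_{j=1}^{o}(4(t-j)+1)$ depends on $o$, and $o$ is determined by how the fixed odd tail interleaves with the even rows---which in turn depends on $\beta$. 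So when you compare two partitions $\lambda,\lambda'$ at the same index $i$, you will generally have $o\neq o'$, the backbones differ, and the implication ``(A) and (B) $\Rightarrow$ $\lambda\leq\lambda'$'' is \emph{not} immediate. This is precisely where the paper does real work: writing $u,u'$ for the number of odd rows \emph{remaining} after index $k$ in $\lambda,\lambda'$, one shows $u\leq u'$ at a putative first violation, and then splits into the cases $u'-u$ even and $u'-u$ odd, bounding the backbone discrepancy $\sum_{i=u+1}^{u'}(4i+1)$ against the $\beta$-terms it can create. Your sketch does not contain this comparison, and without it the forward direction does not go through.
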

\end{definition}
\section{Proof of Main Reult}
Let $f_{m, t}: \Irr W_{m} \hookrightarrow X_{2t^2 - t + 4m}$ be the inclusion from the Generalized Springer Correspondence. 
We first make the following observation:
\begin{lemma}
If $t \geq m$, and $\lambda \in f_{m, t}(\Irr W_m)$, then $\lambda_i \in 2\Z \cup (4\Z + 1)$.  
\end{lemma}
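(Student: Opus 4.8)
The plan is to reduce the lemma to a single arithmetic inequality. Membership $\lambda\in f_{m,t}(\Irr W_m)$ means exactly that $\lambda\in X_n$ with $n=2t^2-t+4m$ and that the Springer parameter of $\lambda$, namely $t=\sum_j d(\lambda_j)$, takes this prescribed value; so the first step is to read off $d(\lambda_j)$ according to the residue of $\lambda_j$ modulo $4$. A one-line parity check of $\lambda_j(\lambda_j-1)/2$ gives $d(\lambda_j)=1$ when $\lambda_j\equiv 1\pmod 4$, $d(\lambda_j)=-1$ when $\lambda_j\equiv 3\pmod 4$, and $d(\lambda_j)=0$ when $\lambda_j$ is even. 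Writing $A$ for the number of parts of $\lambda$ that are $\equiv 1\pmod 4$ and $B$ for the number that are $\equiv 3\pmod 4$, we get $t=A-B$, and the lemma becomes the claim that $B=0$.

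Next I would bound $n=\sum_j\lambda_j$ from below using the two conditions defining $X_n$. Condition (1) forces each odd value to occur at most once, so the parts $\equiv 1\pmod 4$ are distinct integers $4a_i+1$ with the $a_i$ distinct and nonnegative; hence they contribute at least $4\binom{A}{2}+A$ to $n$, and likewise the parts $\equiv 3\pmod 4$ contribute at least $4\binom{B}{2}+3B$. Condition (2), together with the parts divisible by $4$, shows that the even parts contribute a nonnegative multiple of $4$, which I simply throw away. This gives $n\ge 2A^2-A+2B^2+B$.

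The final step is the computation. From $4m=n-2t^2+t$ we have $4(m-t)=n-2t^2-3t$; substituting $t=A-B$ and the lower bound for $n$, the $2A^2$ terms cancel and one is left with $4(m-t)\ge 4\bigl(A(B-1)+B\bigr)$. If $B\ge 1$ the right-hand side is at least $4B>0$, so $m>t$, contradicting the hypothesis $t\ge m$; hence $B=0$, which is to say every part of $\lambda$ lies in $2\Z\cup(4\Z+1)$.

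The only delicate point is the second step. A crude estimate such as $n\ge A+3B$ is worthless here, since the quadratic $2(A-B)^2$ concealed in $4(m-t)$ can overwhelm it when $A$ is much larger than $B$. One really must invoke the distinctness of the odd parts (condition (1) of $X_n$) to obtain a lower bound for $n$ that grows like $2A^2$, so that it exactly cancels the $2(A-B)^2$ term and leaves the clean linear expression above. Once that is arranged, the rest is a routine calculation.
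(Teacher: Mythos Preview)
Your proof is correct and follows essentially the same approach as the paper: both identify $t=A-B$ where $A,B$ count parts in $4\Z+1$ and $4\Z+3$, exploit the distinctness of odd parts in $X_n$ to bound $n$ from below, and compare with $n=2t^2-t+4m$ to force $m>t$. The paper's version is marginally more streamlined in that, assuming $B\ge 1$, it immediately uses $A\ge t+1$ and bounds $n\ge\sum_{j=0}^{t}(4j+1)=2t^2+3t+1$ rather than carrying $A$ and $B$ symbolically, but the substance is identical.
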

\begin{proof}
Suppose on the contrary there is an $i$ such that $\lambda_i \in 4\Z + 3$. By definition, $t = \sum_i d(\lambda_i)$. Each $\lambda_i \in 4\Z + 1$ contributes $+1$, and each $\lambda_i \in 4\Z + 3$ contributes $-1$. By definition of $X_n$, each odd integer appears at most once. So 
\begin{equation}
t = |\{i; \lambda_i \in 4\Z + 1\}| - |\{i; \lambda_i \in 4\Z + 3\}|. 
\end{equation}
And then $|\{i; \lambda_i \in 4\Z + 1\}| \geq t + 1$. So 
\begin{equation}
\begin{split}
2t^2 - t + 4m & = |\lambda|  = \sum_{i} \lambda_i \\
& \geq \sum_{i, \lambda_i \in 4\Z + 1} \lambda_i \\
& \geq \sum_{j = 0}^{t} (4j + 1) \\
& = 2t^2 + 3t + 1 \geq 2t^2 - t + 4m + 1.
\end{split}
\end{equation}
This is a contradiction! This lemma also proves that there are exactly $t$ odd integers in $\lambda$, each is in $4\Z + 1$. 
\end{proof}
Now the picture is clear for $t \geq m$. In fact, if $(\alpha, \beta)$ corresponds to $\lambda$, then $\alpha$ represents the deviation of odd integers of $\lambda$ from $(4t - 3, 4t - 7, \cdots, 1)$, and $\beta$ is the even integers of $\lambda$, up to scalar. We have the following lemma:
\begin{lemma}
Suppose $t \geq m$, and $(\alpha, \beta) \in \Irr W_m$ corresponds to $\lambda \in X_{2t^2 - t + 4m}$. Let $\alpha' = (\alpha'_1, \cdots, \alpha'_t)$ be the decreasing sequence of odd integers in $\lambda$, and $\beta' = (\beta'_1, \cdots, \beta'_{2k})$ be the decreasing sequence of even integers of $\lambda$. Then 
\begin{equation}
\alpha_i = \frac{1}{4}(\alpha_i' - (4(t - i) + 1))
\end{equation}
and 
\begin{equation}
\beta_i = \frac{1}{2}\beta_{2i}'.
\end{equation}
\end{lemma}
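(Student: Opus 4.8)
The plan is to produce the inverse map explicitly and check it against the forward construction. Given $(\alpha,\beta)\in\Irr W_m$, pad $\alpha$ by zeros to length $t$ and $\beta$ by zeros to length $k$, set $\alpha'_i := 4\alpha_i + 4(t-i) + 1$ for $1\le i\le t$, and let $\tilde\lambda$ be the partition whose multiset of parts consists of $\alpha'_1 > \alpha'_2 > \cdots > \alpha'_t$ together with, for each $j$, two copies of $2\beta_j$. I will show $\tilde\lambda\in X_{2t^2-t+4m}$ and that the bijection of Section 1 sends $\tilde\lambda$ to $(\alpha,\beta)$; since that bijection is injective this forces $\tilde\lambda=\lambda$, and then the odd parts of $\lambda$ are exactly the $\alpha'_i$ and its even parts are the doubled $\beta_j$'s, which is precisely the two displayed identities (read with the zero-padding convention of the Remark).

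First I would verify membership: successive differences satisfy $\alpha'_i - \alpha'_{i+1} = 4(\alpha_i-\alpha_{i+1})+4\ge 4$, so the odd parts are distinct, each even value $2\beta_j$ has even multiplicity, all $\alpha'_i\ge 1$, and the sum is $\sum_i\alpha'_i + 4|\beta| = (4|\alpha| + 2t^2 - t) + 4|\beta| = 2t^2 - t + 4m$. Since every $\alpha'_i\equiv 1\pmod 4$ we have $d(\alpha'_i)=1$ and $d(2\beta_j)=0$, so the invariant attached to $\tilde\lambda$ is $\sum_j d(\tilde\lambda_j)=t$, consistent with $\frac14(n-2t^2+t)=m$, so $\tilde\lambda$ does lie over the correct factor $\Irr W_m$.

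The core computation is the value of $t_i$ at each position of $\tilde\lambda$. At the slot occupied by the odd part $\alpha'_i$ only the odd parts lying below it contribute, so $t_i = t-i$, and case (1) replaces $\alpha'_i$ by $\frac14(\alpha'_i-1)-(t-i)=\alpha_i$, with label $a$. At the $2c$ consecutive slots occupied by a block of copies of an even part $e=2\beta_j$ (here $c$ is the multiplicity of the value $\beta_j$ in $\beta$), all slots share the same $t_i$, and I claim $t_i=\lceil\beta_j/2\rceil$. This is where $t\ge m$ enters: $\mathrm{length}(\alpha)\le|\alpha|=m-|\beta|\le t-\lceil\beta_1/2\rceil$, because $|\beta|\ge\beta_1\ge\lceil\beta_1/2\rceil$ and $t\ge m$; hence the last $\lceil\beta_1/2\rceil$ entries of the padded $\alpha$ vanish, so the $\lceil\beta_j/2\rceil$ smallest odd parts of $\tilde\lambda$ are exactly $1,5,9,\ldots,4\lceil\beta_j/2\rceil-3$, all strictly below $2\beta_j$, while the next odd part up is $\ge 4\lceil\beta_j/2\rceil+1\ge 2\beta_j+1$; thus exactly $\lceil\beta_j/2\rceil$ odd parts lie below the block. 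Substituting $t_i=\lceil\beta_j/2\rceil$ into case (3) when $\beta_j$ is odd and into case (4) when $\beta_j$ is even, a short parity check shows each of the $c$ entries labelled $b$ becomes $\beta_j$ and each of the $c$ entries labelled $a$ becomes $0$. Finally I collect: all $b$-labelled entries come from even parts, and sorted they are $(\beta_1,\ldots,\beta_k)=\beta$; the $a$-labelled entries are the $\alpha_i$ together with some zeros, sorted giving $\alpha$; and since $t>0$ whenever $m\ge 1$ (the case $m=0$ being trivial), the output is $(\alpha,\beta)$, as needed.

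I expect the one genuinely delicate point to be the $t_i$ computation at the even-part slots: one must check that $t\ge m$ really does keep every even part small enough to interleave only with the trivial tail $1,5,9,\ldots$ of the odd parts, and then push the ceiling/floor arithmetic through cases (3) and (4) carefully enough that the $b$-value lands on $\beta_j$ exactly rather than off by one. The remaining verifications are mechanical.
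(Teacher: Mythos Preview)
Your argument is correct, and it proceeds by a genuinely different route than the paper's. The paper works \emph{forward}: it starts from the unknown $\lambda$, uses a size-counting contradiction (if some $4l-3$ were missing below an even part $e$, the total $|\lambda|$ would be too large) to pin down the tail $1,5,9,\ldots,4l-3$ of odd parts, and then reads off the $\alpha$- and $\beta$-values from Lusztig's recipe. You instead work \emph{backward}: you write down a candidate $\tilde\lambda$ explicitly from $(\alpha,\beta)$, check it lies in $X_{2t^2-t+4m}$ with the correct invariant $t$, verify that the forward recipe returns $(\alpha,\beta)$, and then invoke injectivity of the bijection to conclude $\tilde\lambda=\lambda$. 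Your key step---showing that exactly $\lceil\beta_j/2\rceil$ odd parts lie below $2\beta_j$---comes from the clean chain $\mathrm{length}(\alpha)\le|\alpha|=m-|\beta|\le t-\lceil\beta_1/2\rceil$, which is where $t\ge m$ enters; this replaces the paper's contradiction argument and is arguably more transparent. The trade-off is that you lean on the bijection being globally injective (a fact imported from \cite{Lusztig}), whereas the paper's direct analysis could in principle be read as an independent computation once Lemma~1 is in hand. Two small points worth tidying in a final write-up: the symbol $k$ in ``pad $\beta$ to length $k$'' clashes with the $k$ in the lemma statement, and you should say explicitly that only the nonzero $\beta_j$ contribute even parts to $\tilde\lambda$ (the zero padding just appends trailing zeros, harmless under the paper's convention).
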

\begin{proof}
Suppose $\beta_{2s}' = e \in 2\Z$, and $4l -3 < e < 4l + 1$. We prove that $1, 5, 9, \cdots, 4l - 3$ are contained in $\alpha'$. Otherwise, there are at least $t - l + 1$ odd integers greater than $4l - 3$, and then 
\begin{equation}
\begin{split}
2t^2 - t + 4m & = |\lambda| \geq |\alpha'|  + 2e\\
& \geq 1 + 5 + \cdots + (4l - 7) + (4l + 1) + \cdots + (4t + 1) + 2 \cdot (4l - 2) \\
& = 2t^2 +3t + 4l .
\end{split}
\end{equation}
So 
\begin{equation}
t \leq m - l.
\end{equation}
This is only possible when $l = 0$.

Now suppose $e = \beta_{1}' = \beta_{2}' = \lambda_{k}$ and $l$ the same as above. Then $t_k = l$. If $l = 0$, then the lemma is automatically true. Otherwise, from Lusztig's formula, suppose there are $2p$ such $e$ in $\lambda$. There are two cases:
\begin{enumerate}
\item $e = 4l - 2$. Then those $2p$ numbers are replaced by $\frac{1}{4}(e - 2) + l, \frac{1}{4}(e + 2) - l, \cdots$ alternatively. These are exactly $2l - 1, 0, 2l - 1 \cdots, 0$. So $\beta_1 = 2l - 1 = \frac{1}{2}\beta_{2}'$. 
\item $e = 4l$. Similar as above, $\beta_1 = \frac{1}{4}e + l = 2l = \frac{1}{2}\beta_2'$. 
\end{enumerate}
In either cases, $\beta_i = \frac{1}{2}\beta_{2i}'$. 

For $\alpha$, notice that the above calculation shows that all $a$ labels from even integers gives modified number $0$. Since $\alpha$ is in decreasing order, we only need to consider  $a$ lables from odd integers. For $a$ lables from elements $\lambda_i \leq \beta_1$, we replaced it by $\frac{1}{4}(\lambda_i -1) - t_i$. Notice that the odd integers below $\beta_1$ are exactly $1, 5, 9, \cdots, 4l - 3$. So $\lambda_i$ is exactly the $(t_i + 1)$-th odd integer. They contributes to 0 in $\alpha$. 

For $\lambda_i \geq \beta_1$, there are exactly $i - 1$ odd integers greater than $\lambda_i$. So $\lambda_i = \alpha_i'$ and there are $t - i$ odd integers $\lambda_j$ with index $j$ greater than $i$. By definition, $t_i = t - i$. Therefore, 
\begin{equation}
\alpha_i = \frac{1}{4}(\lambda_i - 1) - t_i = \frac{1}{4}(\alpha_i' - 4(t - i) - 1).
\end{equation}
\end{proof}
%%%%%%%%%%%%%%%%%%%%%%%%%%%%%%%%%%%%%%%%%%%%%%

%%%%%%%%%%%%%%%%%%%%%%%%%%%%%%%%%%%%%%%%%%%%%%
Now we use the above observation to prove the main theorem. Let $(\alpha, \beta)$,$ (\alpha', \beta')$ be bipartitions with order $m$. They correspond to $\lambda, \lambda'$ from the inclusion $f_{m, t}: \Irr W_{m} \hookrightarrow X_{2t^2 - t + 4m}$. Here $t \geq \frac{3}{2} m$ is a fixed integer. 

\noindent \emph{Proof of main theorem:} \\
\noindent (a) If $(\alpha, \beta) \geq (\alpha', \beta')$ in the dominance order, then $\lambda \geq \lambda'$. 
\begin{proof}
Suppose $\lambda = (\lambda_1, \lambda_2, \cdots)$ in decreasing order, and $\lambda' = (\lambda_1', \lambda_2', \cdots)$ also in decreasing order.

Notice that 
\begin{equation}
\begin{split}
2t^2 - t + 4m & = \sum_{i}\lambda_i = \sum_{\lambda_i odd}\lambda_i + \sum_{\lambda_i even}\lambda_i \\
& \geq \sum_{i = 0}^{t - 1}(4i + 1) + \sum_{\lambda_i even}\lambda_i \\
& = 2t^2 - t + \sum_{\lambda_i even}\lambda_i.
\end{split}
\end{equation}
Since even numbers appear in pairs, we conclude that $\lambda_i \leq 2m$, for even entries. 

If there is an $s$ such that $4s + 1$ does not appear in $\lambda$, suppose $s$ is the smallest one. Then
\begin{equation}
2t^2 - t + 4m \geq \sum_{i = 0}^{t - 1}(4i + 1) - (4s + 1) + (4t + 1) = 2t^2 - t + 4(t - s). 
\end{equation}
So $s \geq t - m$, and $4s +1 > 2m$. This means the part that contributes to $\alpha$ and the part that contributes to $\beta$ are separated. This separation is independent of $\lambda$. In particular, we know $\lambda_{m} = \lambda_{m}'= 4(t-m) + 1$, and the odd integers after them form an arithemetic sequence with common difference 4. 

Therefore, for $k \leq m - 1$, according to lemma 2, the conditions 
\begin{equation}
\lambda_1 + \lambda_2 + \cdots + \lambda_k \geq \lambda_1' + \cdots + \lambda_k'
\end{equation}
is equilvalent to 
\begin{equation}
\alpha_1 + \cdots + \alpha_k \geq \alpha_1' + \cdots + \alpha_k'
\end{equation}
 If $\lambda \geq \lambda'$ does not hold, then there is a smallest integer $k$, such that 
\begin{equation}
\lambda_1 + \cdots + \lambda_k < \lambda_1' + \cdots + \lambda_k'.
\end{equation}
And we know from definition of $k$ that $\lambda_k < \lambda_k'$ and $k > m$. Suppose the remaining odd integers of $\lambda$ are $1, 5, 9, \cdots, 4u + 1$, and the remaining odd integers for $\lambda'$ are $1, 5, 9, \cdots, 4u' + 1$. Then $4u +2 \leq \lambda_k < \lambda_k' \leq 4u' + 5$. So $u \leq u'$. There are two cases. 

\begin{enumerate}[(1)]
\item $u' - u$ is even. The number of even integers appeared in $\{\lambda_1, \cdots, \lambda_k\}$ is $k - (t - (u+1)) = k - t + u +1$. By assumption, $k - t + u + 1$, $k - t + u' + 1$ are both odd integers or both even integers. If they are both odd, then we consider $k' = k + 1$. In this case, $\lambda_{k + 1} = \lambda_k$. So by adding one term, we still have 
\begin{equation}
\lambda_1 + \lambda_2 + \cdots + \lambda_{k'} \leq \lambda_1' + \cdots + \lambda_{k'}'.
\end{equation}
So we will only deal with the case $k - t + u + 1, k - t + u' + 1$ both even. In this case, since $|\lambda| = |\lambda'|$, we have
\begin{equation}
\sum_{i > k}\lambda_i > \sum_{i > k}\lambda_i'.
\end{equation}
So from lemma 2, 
\begin{equation}
\begin{split}
& \sum_{i = 0}^{u}(4i+1) + 4(m - |\alpha| - \beta_1 - \cdots - \beta_{\frac{k - t + u + 1}{2}}) \\
& > \sum_{i = 0}^{u'}(4i+1) + 4(m - |\alpha'| - \beta_1' - \cdots - \beta_{\frac{k - t + u' + 1}{2}}').
\end{split}
\end{equation}
Let $S(x) = |\alpha| + \beta_1 + \cdots + \beta_x$. Similarly define $S'$. Then the above can be written as 
\begin{equation}
0 > (u' - u)(2u + 2u' + 3) + 4S(\frac{k - t + u + 1}{2}) - 4S'(\frac{k - t + u' + 1}{2}).
\end{equation}
Notice that
\begin{equation}
\begin{split}
S(\frac{k - t + u + 1}{2}) - S'(\frac{k - t + u' + 1}{2})  &= S(\frac{k - t + u' + 1}{2}) - S'(\frac{k - t + u' + 1}{2}) - \sum_{i = \frac{k - t + u + 1}{2} + 1}^{\frac{k - t + u' + 1}{2}}\beta_i \\
& \geq - \sum_{i = \frac{k - t + u + 1}{2} + 1}^{\frac{k - t + u' + 1}{2}}\beta_i \\
& \geq - \frac{u' - u}{2}\frac{4u + 4}{2}. 
\end{split}
\end{equation}
The last inequality is from the definition of $u$, which implies $4u + 5$ is some $\lambda_i, i \leq k$. So all the even integer after that must be less than or equal to $4u + 4$. Therefore the corresponding $\beta$ is less than or equal to $(4u + 4)/2$. Combine these inequality, we get 
\begin{equation}
0 > (u' - u)(2u + 2u' + 3) - (u' - u)(4u + 4) = (u' - u)(2(u' - u) - 1) \geq 0.
\end{equation}
This is a contradiction, since $u' - u$ is an integer. 

\item $u' - u$ is odd. Then $u' - u \geq 1$. Let $A(x) = \lambda_1 + \cdots + \lambda_x$, and similarly define $A'(x)$. Then $B(k) := A(k) - A'(k) < 0$. Consider $B(k + 1)$. Since $\lambda_{k+1} \leq 4u + 4 < 4u' + 1 \leq \lambda_k'$, so we still have $B(k + 1) < 0$.

This process will keep going until $u' \leq u$, at some point. Notice that $u' - u$ changes by at most 1 at each step. So the first time this process ends is when $u' = u$. It must stop at some point, since both sequences $\lambda, \lambda'$ are eventually 0, and $B(x) = 0$ for large $x$. Now we can apply same method in case (1), or simply notice that in the case $u' = u$, we must have 
\begin{equation}\label{eq: 28}
\begin{split}
B(x) = A(x) - A(x)' & = 4(|\alpha| + \beta_1 + \cdots + \beta_{\frac{k - t + u + l}{2}}) \\
& - 4(|\alpha'| + \beta_1' + \cdots + \beta_{\frac{k - t + u + l}{2}}') \geq 0.
\end{split}
\end{equation}
This is a contradiction. 
\end{enumerate}

\noindent (b) Suppose $\lambda \geq \lambda'$, then $(\alpha, \beta) \geq (\alpha', \beta')$. Clearly $\alpha \geq \alpha'$ from lemma 2 and the discussion at the beginning of the proof above. So if $(\alpha, \beta) \geq (\alpha', \beta')$ does not hold, then there is a smallest $k$, such that 
\begin{equation}
|\alpha| + \beta_1 + \cdots + \beta_k < |\alpha'| + \beta_1' + \cdots + \beta_k'. 
\end{equation}
We still use the notation $A(x)$ for the sum of first $x$ terms of $\lambda$, and $S(x)$ for the sum of first $x$ terms of $\beta$ and $|\alpha|$. So $S(k) < S(k)'$, $k \geq 1$. By assumption of $k$, $\beta_k < \beta_k'$. If $\beta_k = 0$, then it is automatically a contradiction, since the left side is then $m = |\alpha'| + |\beta'|$. So $0 < \beta_k < \beta_k'$. They both comes from some even integers $2\beta_k, 2\beta_k'$ in the corresponding partition. Suppose they correspond to $\lambda_{x-1}, \lambda_x$ and $\lambda_{x' - 1}', \lambda_{x'}'$, respectively. Suppose $4u + 5 > 2\beta_k >4u + 1$ and $4u' + 5 > 2\beta_k' >4u' + 1$. Then $u' \geq u$. So $x = 2k + t - u - 1 \geq x' = 2k + t - u'-1$.
Now 
\begin{equation}
\begin{split}
A(x) - A'(x') & = A(x') - A'(x) + \sum_{i = x' + 1}^{x}\lambda_{i}' \\
& \geq \sum_{i = x' + 1}^{x}\lambda'_{i}.
\end{split}
\end{equation}
Also notice that 
\begin{equation}
\begin{split}
A(x) - A'(x') & = \sum_{i = u+1}^{u'}(4i + 1) + 4(S(k) - S'(k)) \\
& < \sum_{i = u+1}^{u'}(4i + 1).
\end{split}
\end{equation}
This means 
\begin{equation}
\sum_{i = u+1}^{u'}(4i + 1) > \sum_{i = x' + 1}^{x}\lambda'_{i}.
\end{equation}
However, this is a contradiction, since $\{4u + 5, 4u + 9, \cdots, 4u' + 1\} \subset \{\lambda_{x' + 1}, \lambda_{x' + 2}, \cdots\}$, and $x - x' = u' - u$. 
\end{proof}

We now give an example that violates the above partial order for $t = m-1$. The partition $\lambda = (4t + 1, 4t - 3, \cdots, 9, 5, 3, 1)$ corresponds to $(\alpha, \beta)$, where $\alpha = (1, 1, \cdots, 1)$ ($t$ ``1''s.) and $\beta = (1)$.

The partition $\lambda' = (4t + 1, 4t - 3, \cdots, 9, 5, 2, 2)$ corresponds to $(\alpha', \beta')$, where $\alpha' = (1, 1, \cdots, 1, 1)$ ($t + 1$ ``1''s.) and $\beta' = (0)$.

Then $\lambda > \lambda'$, but $(\alpha, \beta) < (\alpha', \beta')$. 

\section*{Acknowledgement}
The author would like to thank prof. George Lusztig for suggesting this problem and providing useful comments and directions. This project is funded by the Undergraduate Research Opportunities Program of MIT. 
\begin{bibdiv}
  \begin{biblist}
  \bib{Lusztig}{article}{
  author={Lusztig, George},
  title={Character sheaves on disconnected groups, II},
  journal={Representation Theory},
  volume={8},
  number={4},
  pages={72--124},
  year={2004}
}

  \bib{Dipper-James-Murphy}{article}{
author = {Richard Dipper and Gordon James and Eugene Murphy},
title = {Hecke Algebras of Type Bn at Roots of Unity},
journal = {Proceedings of the London Mathematical Society},
volume = {s3-70},
number = {3},
publisher = {Oxford University Press},

pages = {505--528},
year = {1995}
}

\bib{Iancu}{article}{
author={Meinolf Geck
and Lacrimioara Iancu},
title={Ordering Lusztig's families in type B },
journal={Journal of Algebraic Combinatorics},
year={2013},
volume={38},
number={2},
pages={457--489}
}
  \end{biblist}
\end{bibdiv}
\end{document}